% This is samplepaper.tex, a sample chapter demonstrating the
% LLNCS macro package for Springer Computer Science proceedings;
% Version 2.21 of 2022/01/12
%
\documentclass[runningheads]{llncs}
\usepackage[T1]{fontenc}
\usepackage{subcaption}
% T1 fonts will be used to generate the final print and online PDFs,
% so please use T1 fonts in your manuscript whenever possible.
% Other font encondings may result in incorrect characters.
%
\usepackage{graphicx}
% Used for displaying a sample figure. If possible, figure files should
% be included in EPS format.
\usepackage{algorithmic}
\def\0{\bf \0}
\def\1{\bf \1}
\def\A{{\bf A}}

\def\D{{\bf D}}

\def\H{{\bf H}}

\def\0{{\bf 0}}

\def\R{{\bf R}}

\def\T{{\bf T}}
\def\U{{\bf U}}
\def\V{{\bf V}}

\def\a{{\bf a}}
\def\b{{\bf b}}

\def\e{{\bf e}}

\def\h{{\bf h}}

\def\r{{\bf r}}

\def\u{{\bf u}}
\def\v{{\bf v}}

\def\x{{\bf x}}
\def\y{{\bf y}}
\def\z{{\bf z}}

\def\Tr{{\rm T}}
\def\T{{\rm T}}

\newtheorem{algorithm}{Algorithm}
% If you use the hyperref package, please uncomment the following two lines
% to display URLs in blue roman font according to Springer's eBook style:
%\usepackage{color}
%\renewcommand\UrlFont{\color{blue}\rmfamily}
%\urlstyle{rm}
%
\begin{document}
\title{A Facet Enumeration Algorithm for Convex Polytopes}
%
%\titlerunning{Abbreviated paper title}
% If the paper title is too long for the running head, you can set
% an abbreviated paper title here
%
\author{Yaguang Yang\inst{1}} %\orcidID{0000-0002-9902-3610} }
%
%\authorrunning{Y. Y}
% First names are abbreviated in the running head.
% If there are more than two authors, 'et al.' is used.
%
\institute{NASA, Goddard Space Flight Center \\
8800 Greenbelt Rd, Greenbelt, 20771 MD, USA. \\
\email{yaguang.yang@nasa.gov}}
%\email{yaguang.yang@nasa.gov}
%
\maketitle              % typeset the header of the contribution
\begin{abstract}
This paper proposes a novel and simple facet enumeration algorithm for convex 
polytopes. The algorithm is implemented in Matlab. Some simple polytopes
with known H-representations and V-representations are used as the test examples. 
The preliminary numerical test shows the effectiveness and efficiency of the proposed
algorithm. Due to the duality between the vertex enumeration problem and facet
enumeration problem, we expect that this method can also be used to solve the
vertex enumeration problem.
\keywords{Algorithm \and convex polytope \and facet enumeration.}
\end{abstract}
%
%
%
%\newpage
\section{Introduction}
For convex polytopes, there are two different 
but equivalent representations: (a) H-representation,
and (b) V-representation. H-representation uses a set of 
closed half-spaces to define a convex polytope, i.e., a 
polytope is given as $\A \x \le \b$, where 
$\A \in \R^{m \times d}$ and $\b \in \R^m$ are
given, and $\x \in \R^d$ satisfying the inequality constraints
is the set of points in the 
convex polytope. A $k$-face of a polytope is the set of
$\x$ that meet two conditions: (1) $\A \x \le \b$, and 
(2) for some $d-k$ independent rows of $\A_k$, $\A_k \x = \b$.
For $k=0$, a $0$-face of the polytope is a vertex;
for $k=1$, a $1$-face of the polytope is an edge;
for $k=d-2$, a $(d-2)$-face of the polytope is a ridge;
for $k=d-1$, a $(d-1)$-face of the polytope is a facet.
In the remainder of the paper, we use vertex, edge, and
facet explicitly, but we do not make distinction for other 
$k$-faces in general. V-representation uses a set of
vertices $\mathcal{V}= \{ \v_1, \v_2, \ldots, \v_n \}$ to 
define a convex polytope as a convex hull of $\mathcal{V}$.
Given one of the representations, there is a need, in many
applications \cite{avis,cgaf94,dl97}, to know what the 
other corresponding representation is. The 
transformation from (a) to (b) is known as the vertex 
enumeration and the other from (b) to (a) is known as 
the facet enumeration. 

Vertex enumeration is clearly related to the simplex method
of linear programming. The so-called double-description method
can be dated back to Fourier (1827) \cite{ziegler95} and 
was reinvented by Motzkin \cite{mrtt53}. This method
constructs the polytope sequentially by adding one constraint at
a time. All new vertices produced must lie on the hyperplanes
bounding the constraint currently being inserted. A more
popular vertex enumeration method is based on pivoting, which
was discussed by Chand and Kapur \cite{ck70}, Dyer \cite{dyer83},
Swart \cite{swart85}, among others. The most popular method
in this category is the reverse search method proposed by
Avis and Fukuda \cite{af92}. Their method starts at an
``optimum vertex'' and traces out a tree in depth order
by ``reverse'' Bland's rule \cite{bland77}. 

Noticing that the dual problem of a vertex (resp. facet) enumeration 
problem is the facet (resp. vertex) enumeration problem for 
the same polytope where the input and output are simply 
interchanged, Bremner, Fukuda and Marzetta \cite{bfm98} 
pointed out that the vertex enumeration methods can be 
used for the facet enumeration problem. They 
suggested using a primal-dual approach proposed
in \cite{fr94} that includes two purely combinatorial 
algorithms for enumerating all faces of a d-polytope
based on the combinatorial vertices' description and
some information on edges. Very recently, Avis and Jordan
\cite{aj18} published a scalable parallel vertex/facet 
enumeration code. Avis and Jordan's paper is also a good 
source to find the related works. 
%However, we would like to emphasize that up to now, 
%there is no direct facet enumeration algorithm.

In this paper, we consider the facet enumeration problem
without using duality of the vertex enumeration. To our
best knowledge, this is the first ``direct'' method which
avoids converting back and forth primal and dual problems.
Given the vertices of a convex polytope, 
the idea is to first find all edges of the polytope, which 
provides a vertex/edge diagram that connects any 
vertex to its neighbor vertices. The information is
stored in a matrix $\D$, which can be represented 
as a connected tree. For any vertex, using matrix
$\D$, one can create a branch with depth of $d$. 
If vertices in the branch of the tree from a root vertex 
to an end vertex form a hyperplane, one can 
check if all vertices in $\mathcal{V}$ are in the half
space defined by the hyperplane. If the answer is
true, the hyperplane contains a facet of the polytope,
otherwise, it does not. Repeating these steps for all
vertices will find all half spaces whose intersection
forms the polytope. Using the duality between the 
vertex enumeration problem and facet enumeration
problem, we expect that this method can also be 
used to solve the vertex enumeration problem with some
minor tweaks but that is not the purpose of this paper.

The remainder of the paper is organized as follows:
Section 2 discusses the edge detection. 
Section 3 provides details of the proposed algorithm.
Section 4 presents some numerical examples to 
show the effectiveness and efficiency of the algorithm.
Concluding remarks are summarized in Section 5.
All mathematical proofs are placed in the Appendix.

\section{Edge detection}

Assume that a d-dimensional polytope has $n$ vertices.
Each vertex $\v_i$ is represented as a {\bf row vector}, and
the $n$ vertices are stored in a matrix $\V \in \R^{n \times d}$.  
To have an efficient algorithm,  we set the centroid 
of the polytope $\v_0$ as the origin which is an interior point
of the polytope. Let
\begin{equation}
\u_i = \v_i -\v_0, \hspace{0.1in} i=1, \ldots, n.
\label{shiftP}
\end{equation}
We will denote by $\mathcal{U} = \{ \u_1, \ldots, \u_n \}$
the set of vertices which form a polytope with its center
as the origin of the coordinate system. We also use a matrix 
\[ \U = \left[ \begin{array}{c} \u_1 \\ \vdots \\ \u_n
\end{array} \right] \in \R^{n\times d}
\]
to store these vertices. Our idea is to calculate a {\bf column vector} 
$\h_i \in \R^d$, or the hyperplane $\h_i^{\T} \x =1$, 
where $\x  \in \R^d$ is the set of d-dimensional column 
vectors that spans the hyperplane, so that the $m$ half spaces 
(associated with the $m$ hyperplanes)
\begin{equation}
\H \x := \left[ \h_1, \ldots, \h_m \right]^{\Tr} \x \le \e
\label{Hexpression}
\end{equation}
defines the polytope, where $\e$ is a {\bf column vector} of all 1's. 

The algorithm first establishes a vertex/edge diagram that 
describes all vertex/edge relations of a given polytope. 
The process is as follows: given any vertex $\u_i$
on the polytope,  the proposed algorithm identifies its 
neighbor vertices which are connected by edges. 
Let $\u_i$ and $\u_j$ be two vertices of the polytope.
The line pass both $\u_i$ and $\u_j$ can be defined by
\begin{equation}
\mathcal{L} = \{ \u_i + t (\u_j - \u_i) ~|~ t \in \R \}.
\label{lineEq}
\end{equation}
Assuming that a point $\z$ is on the line of $\mathcal{L}$
and the {\bf row vector} $\z -\0=\z$ (where $\0$ is the origin and
the center of the polytope) is perpendicular to the line 
$\mathcal{L}$, then $\z$ satisfies the following equations:
\begin{equation}
\z = \u_i + t (\u_j - \u_i),  \hspace{0.1in} 
\langle \z, \u_j - \u_i \rangle =0.
\label{perPen}
\end{equation} 
Solving (\ref{perPen}) for $t$ yields
\begin{equation}
t=-\frac{\u_i  (\u_j - \u_i)^{\Tr} } { \| (\u_j - \u_i) \|^2 },
\label{tPara}
\end{equation}
which gives 
\begin{equation}
\z = \u_i - \frac{\u_i (\u_j - \u_i)^{\Tr}  }
{ \| (\u_j - \u_i) \|^2 }(\u_j - \u_i).
\label{zPoint}
\end{equation}

We have the following claim:

\begin{lemma}
$\z  \ne \0$ if $\mathcal{L}$ does not cross the centroid (origin).
\label{zNeo}
\end{lemma}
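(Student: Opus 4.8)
The plan is to recognize that the point $\z$ defined by (\ref{perPen}) is precisely the foot of the perpendicular dropped from the origin onto the line $\mathcal{L}$, i.e. the orthogonal projection of $\0$ onto $\mathcal{L}$. With this geometric reading the statement becomes almost immediate, and I would argue by contraposition.

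First I would note that by the first equation of (\ref{perPen}), namely $\z = \u_i + t(\u_j - \u_i)$, the point $\z$ lies on $\mathcal{L}$ for every value of $t$; in particular the solution given in (\ref{zPoint}) is itself a point of $\mathcal{L}$. Here I would also make explicit that the two vertices are distinct, $\u_i \ne \u_j$, so that $\| \u_j - \u_i \| > 0$; this guarantees that $\mathcal{L}$ is a genuine line and that dividing by $\| \u_j - \u_i \|^2$ in (\ref{tPara}) and (\ref{zPoint}) is legitimate, so $\z$ is well defined.

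To prove the contrapositive, I would suppose $\z = \0$. Since $\z$ lies on $\mathcal{L}$, this immediately yields $\0 \in \mathcal{L}$, i.e. the line $\mathcal{L}$ passes through the origin, which is the centroid of the polytope. This contradicts the hypothesis that $\mathcal{L}$ does not cross the centroid; hence $\z \ne \0$. Equivalently, working directly from the closed form (\ref{zPoint}), the equation $\z = \0$ forces $\u_i = \frac{\u_i (\u_j - \u_i)^{\Tr}}{\| \u_j - \u_i \|^2}(\u_j - \u_i)$, so the vector $\u_i$ is a scalar multiple of the edge direction $\u_j - \u_i$; this collinearity of $\u_i$ and $\u_j$ with $\0$ again places the origin on $\mathcal{L}$.

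There is essentially no hard step here: the entire content of the lemma is the elementary fact that the orthogonal projection of a point onto a line equals that point exactly when the point already lies on the line. The only place that deserves a word of care is the well-definedness of $\z$, which is why I would foreground the assumption $\u_i \ne \u_j$ (and hence $\| \u_j - \u_i \| \ne 0$) before invoking the formulas.
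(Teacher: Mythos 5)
Your argument is essentially identical to the paper's: the paper's proof simply observes that $\z$ lies on $\mathcal{L}$ and $\mathcal{L}$ does not pass through $\0$, hence $\z \ne \0$. Your contrapositive phrasing and the extra remark on well-definedness (requiring $\u_i \ne \u_j$) are fine but add nothing that changes the route.
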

\begin{proof}
Since $\z$ is on $\mathcal{L}$ and $\mathcal{L}$
does not cross $\0$, it must have $\z \ne \0$.
\hfill \qed
\end{proof}

Thus, $\z=\0$ implies that $\u_j$ is not a neighbor
vertex of $\u_i$ (this will reduce the computational effort
if such a scenario appears). Therefore, we may assume, without
loss of generality, that $\z \ne \0$. Denote a {\bf row vector} 
$\a = \z / \| \z \| \ne \0$ and a constant 
$c=\a \z^{\T} = \frac{\z \z^{\T}}{ \| \z \|} =\| \z \| \ne 0$. 
Then, a hyperplane 
$\mathcal{P}$ passing line $\mathcal{L}$ with the normal 
direction $\a^{\Tr}$ is given by 
\begin{equation}
\a (\x-\z^{\Tr}) =\a   \x -c =0,
\label{pCheck}
\end{equation}
where $\a$ is a known {\bf row vector}, $c$ is a constant, and 
$\x \in \R^d$ is any point on the hyperplane $\mathcal{P}$. 
Dividing both sides by $c$ and denote $\h^{\T} = \a/c$,
i.e., $\h$ is a {\bf column vector}, we get
\begin{equation}
\h^{\T}  \x  = 1.
\label{pPlane}
\end{equation}
Intuitively, from the construction of $\mathcal{P}$, if for
all $\u_k$, we have
\begin{equation}
\h^{\T}  \u_k^{\Tr}   \le 1
\label{uCheck}
\end{equation}
with equality hold for only $\u_i$ and $\u_j$,
then the segment between $\u_i$ and $\u_j$ is an edge of 
the polytope and $\u_i$ and $\u_j$ are adjacent vertices.
We summarize the discussion as the following theorem.

\begin{theorem}
Let $\u_i$ and $\u_j$ be two vertices of a convex polytope.
Let $\mathcal{L}$ be the line as defined in (\ref{lineEq}). 
For $t$ given in (\ref{tPara}), $\z=\u_i + t (\u_j - \u_i)$ 
is perpendicular $\mathcal{L}$. Denote $\a = \z / \| \z \|$ 
and $c=\a \z^{\Tr}$ ($\a$ and $\z$ are {\bf row vectors}). 
Let $\h^{\T} = \a /c$. Then, 
\begin{equation}
\h^{\T} \x  =1, \hspace{0.1in} \forall \x \in \R^d
\label{hyperplane}
\end{equation}
is a hyperplane passing $\u_i$ and $\u_j$, and the
following claims hold:

(a) If inequality (\ref{uCheck}) holds for all $\u_k$, 
and the equality holds for only $\u_i$ and $\u_j$,
then the segment between $\u_i$ and $\u_j$ is an edge of 
the polytope, i.e., $\u_i$ and $\u_j$ are adjacent vertices.
%otherwise the segment is not an edge of the polytope.

(b) If inequality (\ref{uCheck}) holds for all $\u_k$, 
and the equality holds for more than two but less
than $d$ vertices, then the line segment between 
$\u_i$ and $\u_j$ is on a $k$-face which is
part of the hyperplane.

(c) If inequality (\ref{uCheck}) holds for all $\u_k$, 
and the equality holds for at least $d$ vertices,
then the line segment between $\u_i$ and $\u_j$
is on the hyperplane and a facet of the polytope is
part of the hyperplane.
\label{edgeFinder}
\end{theorem}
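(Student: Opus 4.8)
The plan is to reduce all three claims to the elementary statement that a linear functional attaining its maximum over the polytope exposes a face, and then to read off the dimension of that face from the number of vertices on the hyperplane. First I would confirm that the hyperplane (\ref{hyperplane}) really passes through $\u_i$ and $\u_j$. Since $\a=\z/\|\z\|$ and $c=\|\z\|$, we have $\h^{\T}=\z/\|\z\|^2$, so $\h^{\T}\u_i^{\T}=\langle\z,\u_i\rangle/\|\z\|^2$. Writing $\u_i=\z-t(\u_j-\u_i)$ and using $\langle\z,\u_j-\u_i\rangle=0$ from (\ref{perPen}) gives $\langle\z,\u_i\rangle=\|\z\|^2$, hence $\h^{\T}\u_i^{\T}=1$; the same computation with $\u_j=\z+(1-t)(\u_j-\u_i)$ yields $\h^{\T}\u_j^{\T}=1$. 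Thus both vertices, and therefore the whole segment $[\u_i,\u_j]$, lie on the hyperplane, establishing the first assertion of the theorem.

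Next I would use the hypothesis (\ref{uCheck}). It says precisely that the linear functional $\x\mapsto\h^{\T}\x$ attains its maximum value $1$ over the polytope $P=\mathrm{conv}(\mathcal{U})$, the maximum being attained exactly at those vertices $\u_k$ for which equality holds. Consequently $F:=\{\x\in P:\h^{\T}\x=1\}$ is the face of $P$ exposed by this functional, and the vertices of $F$ are precisely those $\u_k$ with equality in (\ref{uCheck}); let $p$ denote their number. Because $\dim F$ equals the dimension of the affine hull $\mathrm{aff}(F)$ of its vertices, we obtain the two-sided estimate $1\le\dim F\le p-1$, the lower bound coming from $[\u_i,\u_j]\subseteq F$ with $\u_i\ne\u_j$.

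From this estimate the first two claims are immediate. If $p=2$ then $\dim F=1$, so $F$ is the edge $[\u_i,\u_j]$, which is claim (a); here the impossibility of three collinear vertices (a middle one could not be extreme) is what guarantees $p=2$ is tight. If $3\le p\le d-1$ then $1\le\dim F\le d-2$, so $F$ is a proper $k$-face with $1\le k\le d-2$ that contains the segment but is not a facet, which is claim (b); note that the precise value of $k$ is the affine dimension of the $p$ vertices, not necessarily $p-1$.

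The hard part is claim (c), and I expect the main obstacle to be exactly a direction-of-inequality issue. For (c) one wants $\dim F=d-1$, but the bound $\dim F\le p-1$ is an \emph{upper} bound, so the cardinality condition $p\ge d$ does not by itself force the dimension up: a pentagonal $2$-face carries $p=5$ vertices on its supporting hyperplane yet has dimension $2$, and is a facet only when $d=3$. The statement becomes correct once one also knows that the vertices lying on the hyperplane affinely span it, i.e. that $d$ of them are affinely independent; then $\dim F\ge d-1$, and since $F\subseteq\{\h^{\T}\x=1\}$ lies in a $(d-1)$-dimensional hyperplane we get $\dim F=d-1$, so $F$ is a facet. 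I would therefore prove (c) under this affine-independence reading and point out that ``$d$ vertices'' must be understood as ``$d$ affinely independent vertices'' — which is the situation the algorithm intends, since the $d$ vertices are accumulated along a depth-$d$ path precisely so as to span the candidate hyperplane.
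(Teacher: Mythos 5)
Your argument for part (a) is, underneath the exposed-face language, essentially the paper's own: the paper writes an arbitrary point of the polytope as a convex combination $\sum_k \lambda_k \u_k$ and observes that any positive weight on a vertex with strict inequality in (\ref{uCheck}) forces $\h^{\T}\x < 1$, so the intersection of the polytope with the hyperplane is exactly the segment between $\u_i$ and $\u_j$ --- which is precisely your statement that the functional exposes a face whose vertices are the equality vertices. (Two small points in your favor: the paper's proof has a typo, ``$\h^{\T}\u_k^{\T}<0$'' where $<1$ is meant, and it asserts rather than computes that $\u_i,\u_j$ lie on the hyperplane; your verification via $\langle \z, \u_j-\u_i\rangle=0$ supplies that step.) Where you genuinely depart from the paper is part (c). The paper disposes of (b) and (c) with ``follows similar argument,'' whereas you correctly observe that ``equality holds for at least $d$ vertices'' does not by itself produce a facet: the bound $\dim F \le p-1$ runs the wrong way, and a $2$-face with five vertices in $\R^4$ satisfies the hypothesis while failing the conclusion. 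Your repaired reading --- $d$ \emph{affinely independent} vertices on the hyperplane, so that $\dim F = d-1$ --- is the condition the algorithm actually uses when it assembles $d$ vertices to solve the system (\ref{ithPlane}); this is a genuine gap in the paper's statement and proof that your write-up identifies and closes rather than a defect in your own argument.
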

\begin{proof}
We show part (a) only because parts (b) and (c) follows
similar argument. Since (\ref{uCheck}) holds for all vertices 
$k=1, \ldots,n$, all points of the polytope are inside of the 
half space $\h^{\T} \x \le 1$. The half space contains the polytope.
Since $\u_i$ and $\u_j$ are on the hyperplane, all points
on the line segment between $\u_i$ and $\u_j$ are on 
the hyperplane. Since $\h^{\T} \u_k^{\T} <0$ holds for all $\u_k$ 
satisfying $\u_i \ne \u_k \ne \u_j$, for any point in the 
convex hull such that 
\[
\sum_{k=1}^{n} \lambda_k \u_k
\]
with at least one $\lambda_k >0$ and $k \notin \{ i, j \}$, we 
have $\h^{\T} \sum_{k=1}^{n} \lambda_k \u_k^{\T} <1$ because 
$\sum_{k=1}^{n} \lambda_k =1$. Therefore, all those
points in the convex polytope are not on the hyperplane.
Since for all points of the polytope, only line segment 
between $\u_i$ and $\u_j$ are on the hyperplane, the line
segment between $\u_i$ and $\u_j$ is an edge of the polytope.
\hfill \qed
\end{proof}

\begin{remark}
For the purpose of facet enumeration, we are only 
interested in cases (a) and (c) because case (a) of 
Theorem \ref{edgeFinder} implies that $\u_j$ is a neighbor
vertex of $\u_i$, which is used to construct vertex/edge
diagram, and case (c) will be recorded to reduce computational
effort.
\end{remark}

If inequality (\ref{uCheck}) does not hold for 
at least one of $\u_k$, $k=1,\ldots,n$, then we
need to be a little bit more careful. In this case,
we have the following theorem.
\begin{theorem}
Let $\u_i$ and $\u_j$ be any two vertices of a convex polytope,
$\y^*$ and $f^*$ be the optimal solution of (\ref{edgeLP}) in appendix. 
Denote that $c=\h^{\T} \left( \frac{\u_j +\u_i}{2} \right)^{\T}$, 
then the following claims hold:

(a) If $f^*<0$, then, the line segment connecting $\u_j$ and
$\u_i$ is an edge of the polytope.

(b) If $f^*=0$  and $\y^*-\r \neq \0$, then, $\u_j$ and $\u_i$ do not form 
an edge of the polytope and the hyperplane contains either a 
$k$-face if less than $d$ vertices are on the hyperplane 
or a facet if at least $d$ vertices are on the hyperplane.

(c) If $f^*>0$, then, $\u_j$ and $\u_i$ does not form 
an edge of the polytope and the hyperplane does not 
form a $k$-face or a facet of the polytope.
\label{thm2}
\end{theorem}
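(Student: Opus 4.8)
The plan is to translate the algebra of (\ref{edgeLP}) back into the geometry of a supporting hyperplane and then reduce each of the three cases to the face characterization already used in the proof of Theorem \ref{edgeFinder}. Write $\m=\frac{\u_j+\u_i}{2}$ and $\h^{\T}=\y^*-\m$. The equality constraint of (\ref{edgeLP}) is exactly $\h^{\T}(\u_j-\u_i)^{\T}=0$, so $\h$ is perpendicular to the segment direction, and (as verified just after (\ref{newPlane})) the plane $\h^{\T}\x=c$ passes through both $\u_i$ and $\u_j$; hence $\h^{\T}\u_i^{\T}=\h^{\T}\u_j^{\T}=c$. The one identity I would use repeatedly is
\[
\h^{\T}\u_k^{\T}-c=\h^{\T}(\u_k-\m)^{\T},
\]
whose right-hand side is precisely the quantity bounded by $f$ in the inequality constraints of (\ref{edgeLP}). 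Consequently $f^*$ is the largest signed height of any vertex $\u_k$, $k\notin\{i,j\}$, measured from the common level that the plane attains at $\u_i$ and $\u_j$, minimized over all admissible orientations $\h$.

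For part (a) I would argue exactly as in Theorem \ref{edgeFinder}(a): the hypothesis says every $\u_k$ with $k\notin\{i,j\}$ lies strictly below the plane, so $\h^{\T}\x=c$ supports the polytope and meets it only along $[\u_i,\u_j]$. Indeed any convex combination $\sum_k\lambda_k\u_k$ carrying positive weight on some $k\notin\{i,j\}$ satisfies $\h^{\T}(\sum_k\lambda_k\u_k)^{\T}<c$, so it is off the plane; the exposed face is therefore the one-dimensional segment, i.e.\ an edge. For part (b) the optimum is attained with some $\u_k$ sitting on the plane while all others stay below it, so the optimal supporting hyperplane contains $\u_i$, $\u_j$ and every maximizing $\u_k$, and the segment is properly contained in that face. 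To split the $k$-face case from the facet case I would pass to the affine hull of the vertices lying on the plane and count affine dimension: at most $d-1$ such vertices span a proper $k$-face, whereas $d$ affinely independent vertices on the plane span a $(d-1)$-dimensional face, i.e.\ a facet.

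Part (c) is where the real content sits and is the step I expect to be the main obstacle. I must show that when even the minimizing orientation leaves some $\u_k$ strictly above the plane, no supporting hyperplane through the segment exists, so $[\u_i,\u_j]$ is interior and lies in no face. The crux is to justify that (\ref{edgeLP}) ranges over every orientation that could expose the segment: any hyperplane supporting the polytope along a face that contains $[\u_i,\u_j]$ must have normal perpendicular to $\u_j-\u_i$ and must pass through the segment, which are exactly the equality and the box constraints of (\ref{edgeLP}); and since the bounded quantity there is homogeneous of degree one in $\h$, the box merely fixes a scale and cannot change the sign of the optimum. The delicate bookkeeping I would still have to pin down is the dividing level itself: the value of the left-hand side of those inequalities at $\u_i$ and $\u_j$ is $0$ (because $\h\perp(\u_j-\u_i)$), so the honest trichotomy is the sign of $f^*$ relative to that common level, and reconciling this with the comparison against $c$ written in the statement is a point I would need to verify. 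I must also rule out the degenerate feasible direction $\y=\m$, i.e.\ $\h=\0$: hypothesis (\ref{yNotEq}) together with Lemma \ref{zNeo} is exactly what discards this trivial orientation and keeps the three cases exhaustive and mutually exclusive.
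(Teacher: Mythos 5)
Your outline follows the same route as the paper, which in fact offers no formal proof of this theorem at all: the statement is introduced with ``Summarizing the discussion above,'' and the ``proof'' is just the informal three-case discussion that precedes it. The point you flagged as needing verification is the crux, and it cannot be reconciled as the statement is written. From the inequality constraints of (\ref{edgeLP}) one has
\begin{equation*}
f^*=\max_{k\notin\{i,j\}}\ \h^{\T}\bigl(\u_k-\tfrac{\u_j+\u_i}{2}\bigr)^{\T}
=\max_{k\notin\{i,j\}}\ \h^{\T}\u_k^{\T}-c ,
\end{equation*}
so ``every other vertex lies strictly on one side of the plane $\h^{\T}\x=c$'' is equivalent to $f^*<0$, not to $f^*<c$. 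Since the origin is the centroid (an interior point), any plane that actually supports the polytope has $c>0$, and then $f^*<c$ is strictly weaker than $f^*<0$: a configuration with $0\le f^*<c$ would be declared an edge by part (a) even though some $\u_k$ sits on or above the optimal plane. The paper's worked example does not discriminate between the two criteria because there $f^*=-0.5<0<1=c$. So your instinct that the ``honest trichotomy'' is against $0$ is correct, and no argument along the lines you (or the paper) sketch can deliver the trichotomy against $c$.

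A second obstruction affects your treatment of parts (b) and (c): the point $\y=\frac{\u_j+\u_i}{2}$ is feasible for (\ref{edgeLP}) (it satisfies the equality and box constraints and turns every inequality into $0\le f$), hence $f^*\le 0$ always. Under the corrected $0$-threshold, case (c) is vacuous; and when $f^*=0$ the optimal set contains the degenerate solution $\h=\0$, for which the ``hyperplane'' in parts (b) and (c) is undefined, so your homogeneity argument (``the box merely fixes a scale'') cannot get started without first excluding this optimizer. Hypothesis (\ref{yNotEq}) does not do that for you, because (\ref{yNotEq}) constrains $\y$, not the LP's choice of minimizer. In short, you have correctly located the two genuine gaps, but they are gaps in the theorem and in the paper's own discussion, not defects peculiar to your write-up; a complete proof would have to restate the trichotomy in terms of the sign of $f^*$ and add a normalization excluding $\h=\0$.
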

\begin{proof}
Denote the middle point between $\u_j$ and $\u_i$ as a {\bf row vector}
$\r=[r_1,r_2, \ldots,r_d]=\frac{\u_j +\u_i}{2}$, which is also the vector
from the origin to the middle point of $\u_j$ and $\u_i$. 
Therefore, for a {\bf row vector}\footnote{If $ \frac{\u_j +\u_i}{2}=\0$,
then, according to Lemma \ref{zNeo}, $\{\u_i, \u_j\}$
is not an edge and will not be considered further.}
\begin{equation}
\y \neq \r,
\label{yNotEq}
\end{equation}
then, $\y-\r$ is a vector starting from
the middle point of $\u_j$ and $\u_i$ and pointing to $\y$.
We consider all $\y$ such that $\y-\r$
is perpendicular to ${\u_j -\u_i}$. Clearly, $\y$ must
satisfy
\begin{equation}
\Biggl\langle \y-\r,~~\u_j -\u_i \Biggr\rangle
=0,
\label{eq11}
\end{equation}
where $\langle \cdot, \cdot \rangle$ denotes the inner 
product of its arguments\footnote{Here, we abuse the
notation of inner product by not restricting its augments to
be {\bf column vectors}. The only restriction is that they must
have the same dimension.}. This is equivalent to
\begin{equation}
\y (\u_j -\u_i )^{\T}=\r (\u_j -\u_i)^{\T}.
\end{equation} 
We want to find a bounded vector $\y-\r$ such 
that the smallest angle between $\y-\r$ and
$\u_k-\r$, for $k \notin \{ i, j \}$, is maximized, 
which is equivalent to solving the following mini-max problem:
\begin{equation}
\min_{\y} \max_{k\notin \{ i, j \}} \Bigl\langle 
\frac{\y-\r}{\| \y-\r \|}, 
\frac{\u_k-\r}{ \| \u_k -\r\| } \Bigr\rangle.
\label{NProb}
\end{equation} 
However, in stead of considering the nonlinear problem 
(\ref{NProb}), we prefer to deal with a simpler problem.  Let 
\begin{equation}
f=\max_{k\notin \{ i, j \}} \Bigl\langle \frac{\y-\r}{\|  \y-\r \|}, 
\frac{\u_k -\r}{ \| \u_k -r \|} \Bigr\rangle.
\end{equation}
Since $\|  \y-\r \|$ is independent of $k$, 
we may consider the following linear programming problem: 
\begin{eqnarray}
\min_{\y,f} & f & \nonumber \\
s.t. &  (\u_j -\u_i ) \y^{\T}=\r (\u_j -\u_i)^{\T}, &  \nonumber \\
& (\u_k-\r)(\y-\r)^{\T} 
    \leq  f \| \u_k -\r \|,
     & k\notin \{ i, j \}, \nonumber  \\
& -{\bf 1} \le \y-\r \le {\bf 1}, & 
\label{edgeLP}
\end{eqnarray}
where ${\bf 1}$ is a {\bf row vector} of all ones. There are
many efficient algorithms \cite{dantzig63,wright97,yang20} 
for solving the LP problem, which is out of the scope of
our discussion here. Let $\y^*$ and $f^*$
be the solution of (\ref{edgeLP}) ($\y^*$  is a constant
{\bf row vector}), then the {\bf column vector} $\x$ in the following
equation
\begin{equation}
\left( \y^* -\r \right) \left( \x -\r^{\T} \right)=0
\label{newPlane}
\end{equation}
defines a plane that is perpendicular to $\y^* -\r$
and passes the line segment between $\u_i$ and $\u_j$. 
Substituting $\x^{\T}=\u_i$ to the left side of (\ref{newPlane}) 
and applying (\ref{eq11}) gives
\begin{equation}
-\frac{1}{2}
\Biggl\langle \y^* -\r,~~\u_j -\u_i \Biggr\rangle
=0.
%\label{eq11}
\end{equation}
Therefore, $\u_i$ is on the plane defined by (\ref{newPlane}).
Substituting $\x^{\T}=\u_j$ to the left side of
(\ref{newPlane}) and applying (\ref{eq11}) gives
\begin{equation}
\frac{1}{2}
\Biggl\langle \y^* -\r,~~\u_j -\u_i \Biggr\rangle
=0.
%\label{eq11}
\end{equation}
Therefore, $\u_j$ is on the plane defined by (\ref{newPlane}).
Let $\h^{\T}=\y^* -\r$, according to (\ref{yNotEq}), 
it must have $\h \neq \0$. Assuming that the middle point of $\u_i$ and 
$\u_j$ is not in the origin, i.e., $\r \neq \0$,
we can write the plane of (\ref{newPlane}) as  
\begin{equation}
\h^{\T} \x=\h^{\T} \r^{\T} =c \neq 0.
\end{equation}

If the minimum of (\ref{edgeLP}) $f^* < 0$,
%$f^*<\h^{\T} \left( \frac{\u_j +\u_i}{2} \right)^{\T}$, 
it means that all $\u_k$, $k \notin \{ i, j \}$, are not on 
the plane, and they are all on one side of the plane, 
therefore, the  line segment connecting $\u_j$ and
$\u_i$ is an edge of the polytope. 
If $f^* = 0$,
%$f^*=\h^{\T} \left( \frac{\u_j +\u_i}{2} \right)^{\T}$,
it means that besides $\u_j$ and $\u_i$,
there are additional vertices $\u_k$, $k \notin \{ i, j \}$
on the plane; although all vertices are on the same side
of the plane defined by (\ref{newPlane}), $\u_j$ and 
$\u_i$ do not form an edge of the polytope. 
If $f^* > 0$,
%$f^*>\h^{\T} \left( \frac{\u_j +\u_i}{2} \right)^{\T}$, 
it means that it is impossible to find a plane which 
passes $\u_j$ and $\u_i$, such that all $\u_k$ are on 
the same side of the plane. Therefore, $\u_j$ and $\u_i$ 
does not form an edge of the polytope.
\hfill \qed
\end{proof}

\begin{remark}
If $f^*=0$ and $\y^*-\r =\0$, this solution of (\ref{edgeLP})
does not give us the necessary information to use Theorem \ref{thm2}, 
in this case, we may add an additional inequality constraint:
\begin{equation}
\sum_{k=1}^{d} (y_k - r_k) \geq 1
\label{ymrEq}
\end{equation} 
in the linear programming problem (\ref{edgeLP}) to enforce
$\y^*-\r \neq \0$.
\end{remark}

\begin{remark}
It is worthwhile to point out that the Matlab function {\tt linprog}
oftentimes fails to find the optimal solution of (\ref{edgeLP}), while 
the facet pivot method developed in \cite{yang21} has no 
problem for all tested problems.
\end{remark}

\begin{remark}
Assume that $\y^*\neq \r$ and $f^*=0$ be the solution of (\ref{edgeLP}). Denote the index set 
$\mathcal{K} =\{ k~|~(\u_k-\r)(\y^*-\r)^{\T} =  f^* \| \u_k -\r \|, ~ k\notin \{ i, j \} \}$.
Then, $\mathcal{F}=\{ \u_i, \u_j, \u_{\mathcal{K}} \}$ forms 
a $k$-face. In addition, if $| \mathcal{F} | \geq d$, then, $\mathcal{F}$ forms a facet.
This information will save some computational effort in the next section.
\end{remark}

%\begin{algorithm}  {\ Edge detection alagorithm} \\
%\begin{algorithmic}[1] 
%\STATE Data: Vertex matrix $\V$. 
%\STATE Initial step: Calculate centroid $\v_0$ and vertex set 
%	$\mathcal{U}$ using (\ref{shiftP}). 
%\FOR{$i =1: n$}
%	\FOR{$j=i+1:n$}
%		\STATE $detA(1:n)=\0$
%		\STATE Compute $t$ using (\ref{tPara}) and $\z$ using (\ref{zPoint}).
%		\STATE  Compute $\a=\frac{\z}{\| \z \|}$ and $c=\a \z^{\T}= \| \z \|$.
%		\FOR {$k=1:n$}
%			\IF  {$\a \u_k -c >0$}
%				\STATE $D(i,j)=-1$, and break
%			\ELSE
%				\STATE  $detA(k)=1$.
%			\ENDIF
%		\ENDFOR
%		\IF{$D(i,j) = -1$}
%			\STATE Find $f$ and $\y$ by solving (\ref{edgeLP}).
%			\STATE Calculate $\h^{\T}=\y*-\frac{\u_j+\u_i}{2}$, and 
%			$\h^{\T}(\frac{\u_j+\u_i}{2})^{\T}:=d$.
%			\IF{$f^*<d$}
%				\STATE $D(i,j)=1=D(j,i)$.
%			\ELSE
%				\STATE $D(i,j)=0$.
%			\ENDIF
%		\ELSE
%			\IF{$length(detA)=2$}		
%				\STATE $D(i,j)=1=D(j,i)$.
%			\ELSE
%				\STATE $D(i,j)=0$.
%			\ENDIF
%		\ENDIF
%	\ENDFOR
%\ENDFOR
%\end{algorithmic}
%\label{searchTree}
%\end{algorithm}

\section{Facet enumeration algorithm}

Now we discuss the information storage, which is
also important for the algorithm design. Let $\D$ be the 
$n \times n$ adjacency matrix whose $(i,j)$ element is one 
if the line segment between $\u_i$ and $\u_j$ is an edge or 
is zero otherwise. Matrix $\D$ is obtained by the process described
in Theorem \ref{edgeFinder} or Theorem \ref{thm2}. Since $\D$ 
is symmetric, using this property will reduce the computational 
effort to find all edges of the polytope. 
%We also use a matrix $\F=(f_{i,j})$, whose rows represent facets 
%and columns represent vertices, such that $f_{i,j} =1 $ if vertex 
%$j$ is on the $i$th facet or $f_{i,j} =0$ otherwise. If a facet is
%found in the process described in Theorem \ref{edgeFinder}
%or Theorem \ref{thm2},
%i.e., inequality (\ref{uCheck}) holds for all vertices and 
%equality holds for at least $d$ vertices, a complete
%row of $\F$ can be obtained by checking the vertices
%using (\ref{uCheck}). 

Let us consider a vertex tree associated with $\D$
and starting from vertex $\u_1$, which connects the adjacent 
vertices. Clearly the structure of the tree is defined by the 
matrix $\D$. To efficiently carry out the 
iteration, we denote by $\mathcal{U}_o$ the set of 
vertices for which all relevant facets have been found, 
by $\mathcal{U}_c$ the set of vertices in the current 
iteration for which the associated hyperplanes are 
to be determined, by $\mathcal{U}_t$ the set of
the vertices that are not in $\mathcal{U}_c$ yet and 
therefore whose facets have not been examined.
$\mathcal{U}_o = \emptyset$ and $\mathcal{U}_c = \u_1$
before the iteration starts. The proposed algorithm 
has two loops. The outer loop uses breadth-first search. 
For a vertex $\u_i \in \mathcal{U}_c$, the algorithm in the 
inner loop finds all facets that intersect at $\u_i$, once all facets 
containing $\u_i$ are found, $\u_i$ is moved from $\mathcal{U}_c$ 
to $\mathcal{U}_o$. While finding facets containing 
$\u_i \in \mathcal{U}_c$, new vertices on each of these 
facets become members of $\mathcal{U}_c$. This process 
is repeated for every $\u_i \in \mathcal{U}_c$ until 
for every vertex, its intersecting facets are found, i.e., the 
process will terminate when $\mathcal{U}_o$ includes
all vertices of $\{ \u_1, \ldots, \u_n \}$.

Having the vertex/edge diagram of $\D$ that connects
all the vertices in $\mathcal{V}$, the inner loop uses the 
following method to identify all the facets that intersects
at $\u_i \in \mathcal{U}_c$. Since every facets that contains 
$\u_i$ can be associated with an edge, for each of these edges
directly connected to $\u_i$, we can create a branch of length
$d$ as follows. Note that nonzero $(i,j)$ elements of the 
adjacency matrix $\D$ define the branch under the vertex $\u_i$. 
For each $\u_i$, assuming that $\u_i$ is the $i$th
vertex, we look at the $i$th row of $\D$ to select the next 
vertex $j$ among all $(i,j)=1$ such that the $j$th 
vertex has not been used in the construction of existing 
hyperplanes; once vertex $j$ is selected, we look at
the $j$th row of $\D$ to select the next 
vertex $k$ among all $(j,k)=1$ such that the $k$th 
vertex has not been used in the construction of existing 
facets; repeat this step until $d$ vertices are found. 

Since hyperplane (\ref{hyperplane}) is uniquely defined by 
$\h$, we will loosely use $\h_i \in \R^d$ for the $i$th facet 
of the polytope if the hyperplane $\h_i$ contains a facet. 
Let $\u_{i_1}$ be one of vertices in the vertex set 
$\mathcal{U}_c$ in current iteration. We say that 
$\{ \u_{i_1}, \u_{i_2}, \ldots,\u_{i_d} \}$ is a branch 
of length $d$ in the tree under $\u_{i_1}$ if for 
$i_j \in \{i_2, \ldots, i_{d-1}\}$, $\u_{i_j}$ is connected 
to only $\u_{i_{j-1}}$ and $\u_{i_{j+1}}$ in 
the set of $\{ \u_{i_1}, \u_{i_2}, \ldots,\u_{i_d} \}$.
Given these $d$ vertices $\{\u_{i_1}, \ldots,\u_{i_d} \}$ 
which are on the branch of the tree starting from $\u_{i_1}$, 
if the base matrix formed by $\{\u_{i_1}, \ldots,\u_{i_d} \}$
is linearly independent, one can solve the linear system
\begin{equation}
\left[ \begin{array}{c}
\u_{i_1} \\ \vdots \\ \u_{i_d} 
\end{array} \right] \h_i := \U_i \h_i =\e.
\label{ithPlane}
\end{equation}
for a candidate facet $\h_i$. If $\h_i^{\T} \u_k^{\T}\le 1$
for $k = 1, \ldots, n$, then $\h_i$ is the hyperplane that
includes a facet. Since (\ref{ithPlane}) may also
create an unwanted hyperplane, these unwanted hyperplanes 
can be identified using one of the following rules: first, 
inequality $\h_i^{\T} \u_i^{\T} \le 1$ is violated, i.e.,
\begin{equation}
\U \h_i \le \e
\label{safeGuard}
\end{equation}
does not hold for some $\u_i \in \U$; second, the hyperplane
has been found earlier in this process (in this case,
the hyperplane will not be add to the matrix $\H$). 
If the newly found hyperplane contains a facet of the convex
polytope that is not in the matrix $\H$, it is then added to $\H$.
%and matrix $\F$ is updated accordingly. 
Otherwise, discard the hyperplane and continue the search in the tree.

For $\u_i \in \mathcal{U}_c$, the idea of the proposed algorithm 
is to examine all branches $\u_{i_1}, \ldots,\u_{i_d}$ 
under $\u_i =\u_{i_1} \in \mathcal{U}_c$ in a
systematically method to reduce the effort to find facets 
associated with $\u_i$ that has not been found. 
However, given the vertex $\u_i \in \mathcal{U}_c$, we may 
not need to calculate all hyperplanes associated with it because
some facets associated with $\u_i$ may be on other 
hyperplanes defined by some $\h_j$
which have been found in current or previous iterations. 
%A check as indicated early using matrix of $\F$ may 
%significantly reduce the amount of calculation.
%We may even terminate early if the checking finds that for every 
%vertex $\u_i \in \mathcal{U}_t$, all its associated 
%hyperplanes have been found before the current iteration. 

We use cross-polytope of Example 4 to describe the process, 
for which the $\D$ matrix is obtained by using the method of the previous 
section as:
\begin{equation}
\D = \left[ \begin{array}{cccccccc}
0~~&     1  ~~&   1   ~~&  1   ~~&  1  ~~&   1 ~~&    1~~&     0 \\
     1~~&     0 ~~&    1 ~~&    1  ~~&   0  ~~&   1 ~~&    1  ~~&   1 \\
     1 ~~&    1  ~~&   0   ~~&  1 ~~&    1  ~~&   0   ~~&  1  ~~&   1 \\
     1  ~~&   1  ~~&   1 ~~&    0 ~~&    1 ~~&    1  ~~&   0  ~~&   1 \\
     1  ~~&   0  ~~&   1 ~~&    1  ~~&   0  ~~&   1  ~~&   1 ~~&    1 \\
     1  ~~&   1  ~~&   0 ~~&    1 ~~&    1 ~~&    0  ~~&   1  ~~&   1 \\
     1 ~~&    1 ~~&    1 ~~&    0  ~~&   1  ~~&   1 ~~&    0  ~~&   1 \\
     0  ~~&   1  ~~&   1  ~~&   1 ~~&    1 ~~&    1 ~~&    1  ~~&   0 \\
\end{array} \right].
\end{equation}
\begin{figure}[htb]
\centerline{\includegraphics[height=5cm,width=10cm]{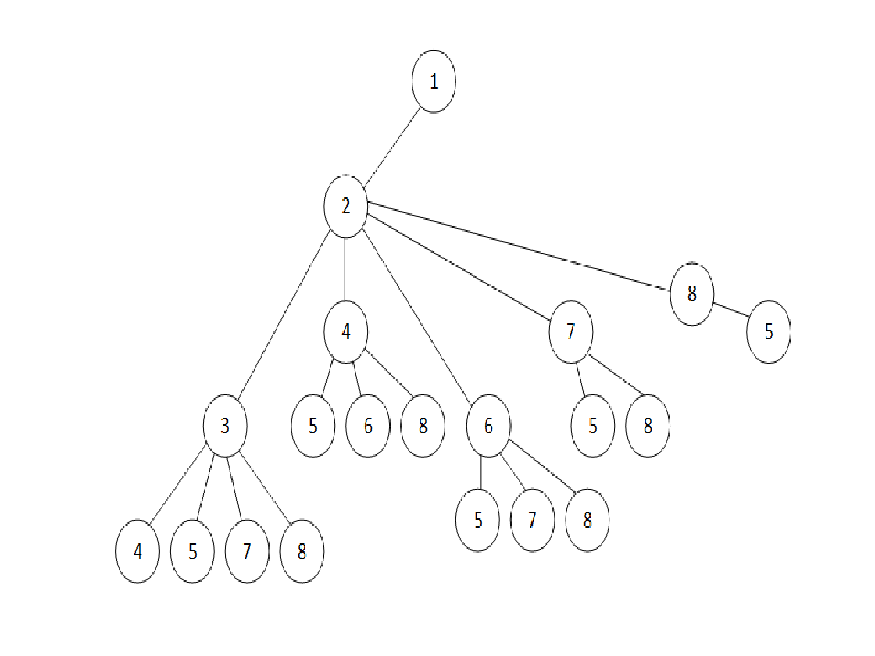}}
\caption{The branches under $\u_i =\u_{1} \in \mathcal{U}_c$.}
\label{treeBranch}
\end{figure}
Assume $\mathcal{U}_c = \{ \u_1 \}$, the branches are constructed as follows: 
In view of the first row of $\D$, $\u_1$ is connected to $\u_2, \u_3, \u_4, \u_5, \u_6, \u_7$. 
We consider the branch with its first edge between $\u_1$ and $\u_2$ as an example. 
The second row of $\D$ indicates
that $\u_2$ is also connected to $\u_3, \u_4, \u_6, \u_7, \u_8$, we select the first
vertex such that the branch includes $\u_1, \u_2, \u_3$. The third row of $\D$
indicates that $\u_3$ is connected to $\u_4, \u_5, \u_7, \u_8$, again, we select
the first vertex such that $\u_1, \u_2, \u_3, \u_4$ forms the first branch with 
length $d=4$ . Using the aforementioned method, we can examine if this 
branch forms a hyperplane. If it does, save it; otherwise, discard it. To construct
the next branch, we move one level up at a time and check if there is any vertex
that has not been considered on this level. In this case, $\u_5$ is the first vertex that 
has not been considered, a new branch $\u_1, \u_2, \u_3, \u_5$ is formed; and 
the new branch is examined to see if it forms a new hyperplane. Repeating
this process, the order of the selected branches is described in 
Figure \ref{treeBranch} that is from the left to the right. 
After all the branches under $\u_1$ has been examined, $8$ facets are found. 
Set $\mathcal{U}_0 = \{ \u_1 \}$. From Figure \ref{treeBranch}, it is clear
that $\mathcal{U}_c = \{ \u_2, \u_3, \u_4, \u_5, \u_6, \u_7, \u_8 \}$. The 
above process can be repeated for every $\u_i \in \mathcal{U}_c$ until
$\mathcal{U}_0 = \{ \u_1, \u_2, \u_3, \u_4, \u_5, \u_6, \u_7, \u_8 \}$.

Denote the number of members of a set $\mathcal{U}$
by $| \mathcal{U} |$. The proposed algorithm is therefore 
given as follows:

\begin{algorithm}  {\ } \\
\begin{algorithmic}[1] 
\STATE Data: Vertex matrix $\V$. 
\STATE Initial step: Calculate centroid $\v_0$ and vertex set 
	$\mathcal{U}$, adjacent matrix $\D$, and initial 
	facet matrix $\H$. Set up initial $\mathcal{U}_0$,
	$\mathcal{U}_c$, and $\mathcal{U}_t$.
\WHILE{$| \mathcal{U}_0 | < n$}
	\STATE set $\mathcal{U}_2 = \emptyset$.
	\FOR{$i=1$: $| \mathcal{U}_c |$}
		\STATE Step 1: Start from $\u_{i_1}$, from row $i_1$ of $\D$, find the first vertex
			$\u_{i_2}$ that has not been considered in the branches under $\u_{i_1}$ so 
			that the new branch includes $\u_{i_1}$ and $\u_{i_2}$.
		\STATE Step 2:  Repeat the process until a new branch $\u_{i_1}, \ldots, \u_{i_d}$
			is formed.
		\STATE Step 3: Solve (\ref{ithPlane}) to get a candidate facet $\h_i$. Test if $\h_i$
			is indeed a new facet. If it is, add it to $\H$; otherwise discard it. 
		\STATE Step 4: Add new vertices to the set $\mathcal{U}_2$ whose members are on the
			hyperplanes of the new branches but are not in $\mathcal{U}_c$.
		\STATE Step 5: Move one level up at a time in the current branch until either (1) there 
			is a vertex on this level that has not been included in a branch under $\u_{i_1}$,
			then repeat Steps 2-4; or (2) all branches under $\u_{i_1}$ have been investigated, 
			increase $i$ by 1.
	\ENDFOR
	\STATE Update $\mathcal{U}_o =\mathcal{U}_o \cup \mathcal{U}_c$. Set
		$\mathcal{U}_c=\mathcal{U}_2$.
\ENDWHILE
\STATE Recover the polytope by shifting the origin $\H (\x  - \v_0) \le \e.$
\end{algorithmic}
\label{searchTree}
\end{algorithm}

%\begin{remark}
%The algorithm can easily be modified for parallel implementation.
%%This can be done for each of the $n$ vertices.
%\end{remark}

\section{Numerical examples}

Several examples are provided in this section.

\begin{example}
The first example is a $2$-dimensional convex polytope
which is a triangle. Its vertices are given by:
\[
\V= \left[ \begin{array} {cc}
0~~ & 0\\ 3~~ &  0 \\ 0~~ &  3
\end{array} \right]
\]
\end{example}
The centroid is $(1,1)$. Algorithm \ref{searchTree}
finds the H-representation as 
\[
\H=  \left[ \begin{array} {rr}
0~~ & -1 \\ -1~~ & 0 \\ 1~~ &  1
\end{array} \right]
\hspace{0.1in}
\b=  \left[ \begin{array} {c}
0\\  0\\  3
\end{array} \right]
\hspace{0.1in}
\]

\begin{example}
The second example is a $3$-dimensional convex polytope 
which is a cubic. Its vertices are given by:
\[
\V= \left[ \begin{array} {rrr}
   0~~ & 0~~ & 0 \\
   0~~ & 0~~ & 1 \\
   0~~ &  1~~ & 0 \\
   1~~ & 0~~ & 0 \\
   0~~ & 1~~ & 1 \\
   1~~ & 0~~ & 1 \\
   1~~ & 1~~ & 0 \\
   1~~ & 1~~ & 1
\end{array} \right]
\]
\end{example}
The centroid is $(0.5,0.5,0.5)$. Algorithm \ref{searchTree}
finds the H-representation as 
\[
\H=  \left[ \begin{array} {rrr}
-2~~   &  0~~   &  0  \\   
     0~~  &  -2~~  &   0  \\  
     0~~   &  0~~  &  -2   \\  
     0~~   &  0~~  &   2   \\  
     0~~   &  2~~  &   0  \\   
     2~~   &  0~~  &   0  \\   
\end{array} \right]
\hspace{0.1in}
\b=  \left[ \begin{array} {c}
0 \\  0 \\  0  \\  2 \\ 2  \\2
\end{array} \right]
\hspace{0.1in}
\]

\begin{example}
The third example is a $3$-dimensional convex polytope 
which is a octahedron. Its vertices are given by:
\[
\V= \left[ \begin{array} {rrr}
   0~~ & 0~~ & 1 \\
   -1~~ & 0~~ & 0 \\
   0~~ &  -1~~ & 0 \\
   1~~ & 0~~ & 0 \\
   0~~ & 1~~ & 0 \\
   0~~ & 0~~ & -1 
\end{array} \right]
\]
\end{example}
The centroid is $(0,0,0)$. Algorithm \ref{searchTree}
finds the H-representation as 
\[
\H=  \left[ \begin{array} {rrr}
-1~~  &  -1~~  &  1     \\
     1~~  &  -1~~   &  1     \\
     1~~   &  1~~   &  1     \\
    -1~~   &  1~~   &  1     \\
    -1~~   & -1~~  &  -1     \\
     1~~  &  -1~~  &  -1     \\
     1~~  &   1~~  &  -1     \\
    -1~~   &  1~~  &  -1       
\end{array} \right]
\hspace{0.1in}
\b=  \left[ \begin{array} {c}
1 \\  1 \\  1  \\ 1 \\ 1  \\ 1 \\ 1  \\ 1
\end{array} \right]
\hspace{0.1in}
\]

\begin{example}
The fourth example is a $4$-dimensional convex polytope 
which is a cross-polytope. Its vertices are given by:
\[
\V= \left[ \begin{array} {rrrr}
     0~~  & 0~~  & 0~~  & 1    \\
    -1~~ &  0~~ &  0~~ &  0    \\
     0~~ & -1~~ &  0~~ &  0    \\
     0~~ &  0~~ & -1~~ &  0    \\
     1~~  & 0~~ &  0~~ &  0    \\
     0~~  & 1~~ &  0~~ &  0    \\
     0~~ &  0~~ &  1~~ &  0    \\
     0~~ &  0~~ &  0~~ &  -1 
\end{array} \right]
\]
\end{example}
The centroid is $(0,0,0,0)$. Algorithm \ref{searchTree}
finds the H-representation as 
\[
\H =  \left[ \begin{array} {rrrr}
    -1~~  &   -1~~  &  -1~~  &   1     \\
     -1~~  &  -1~~  &   1~~  &   1     \\
    - 1~~  &   1~~  &  -1~~  &   1     \\
     -1~~  &  1~~  &   1~~  &   1     \\
     1~~  &   -1~~  &   -1~~  &   1     \\
     1~~  &  -1~~  &   1~~  &   1     \\
     1~~  &   1~~  &  -1~~  &   1     \\
     1~~  &   1~~  &  1~~   &  1     \\
    -1~~  &  -1~~  &   1~~  &  -1     \\
    -1~~  &   1~~  &   1~~  &  -1     \\
     1~~  &  -1~~  &   1~~  &  -1     \\
     1~~  &   1~~  &   1~~  &  -1     \\
    -1~~  &  -1~~  & -1~~  &  -1     \\
    -1~~  &   1~~  & -1~~  &  -1     \\
     1~~  &  -1~~  & -1~~  &  -1     \\
     1~~  &   1~~  &  -1~~  &  -1          
\end{array} \right]
\hspace{0.1in}
\b=  \left[ \begin{array} {c}
1 \\  1 \\  1  \\ 1 \\ 1  \\ 1 \\ 1  \\ 
1 \\  1  \\ 1 \\ 1  \\ 1  \\ 1  \\ 1 \\   1 \\   1
\end{array} \right]
\hspace{0.1in}
\]

For the above problems, using Theorem \ref{edgeFinder}
can find all the edges of the corresponding polytopes.
However, for the following convex polytope, we need 
Theorem \ref{thm2} to find some edges.

\begin{example}
\label{review}
The last example is a $3$-dimensional convex polytope. 
Its vertices are given by:
\[
\V= \left[ \begin{array} {rrrr}
     1~~  & 0~~  & 1    \\
	1~~  & 0~~  & -1    \\
	1.25~~  & -1~~  & 1    \\
	1.25~~  & -1~~  & -1    \\
	0.25~~ &  -1~~ &  1    \\
     0.25~~ &  -1~~ &  -1    \\
    -1~~ &  0~~ &  1    \\
     -1~~ &  0~~ &  -1    \\
     -1.25~~  & 1~~  & 1    \\
	-1.25~~  & 1~~  & -1    \\ 
	-0.25~~ &  1~~ &  1    \\
     -0.25~~ & 1~~ &  -1   
\end{array} \right]
\]
\end{example}
The centroid is $(0,0,0)$. The polytope is depicted
as in Figure \ref{threeD}. Each vertex is numbered 
and its coordinate is provided in the figure. There are 
$12$ vertices, $8$ facets and $18$ edges in this polytope. 
Let $\{i,j\}$ represent the line segment that connects 
vertices $i$ and $j$. Using Theorem \ref{edgeFinder}, we 
can determine $14$ edges. However, for the rest $4$ edges, 
$\{1,2\}$, $\{5,6\}$, $\{7,8\}$, and $\{11,12\}$, 
we have to use Theorem \ref{thm2} to identify them.

\begin{figure}
\centering
\begin{subfigure}{0.48\textwidth}
    \includegraphics[width=\textwidth]{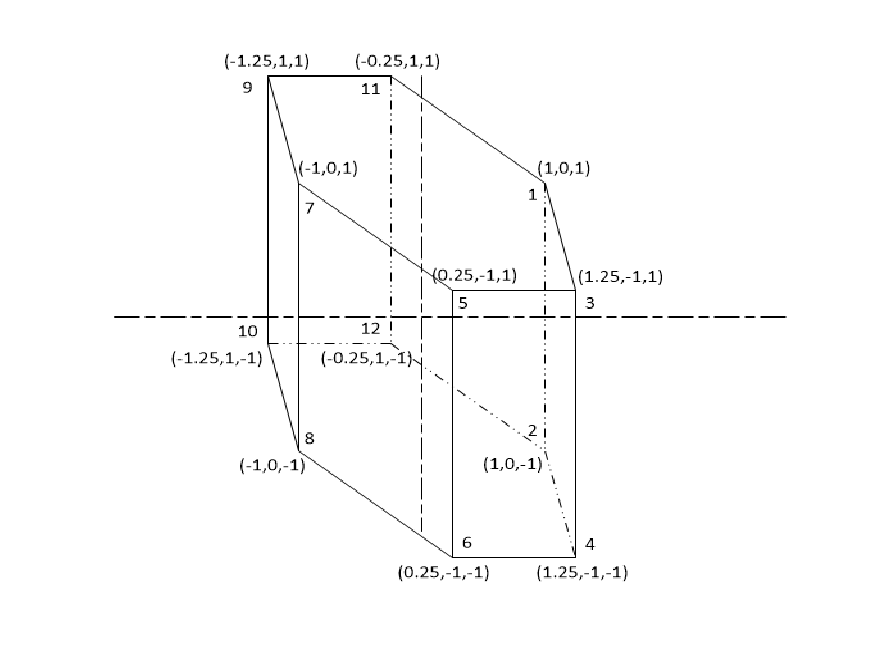}
    \caption{The polytope of Problem \ref{review}.}
    \label{threeD}
\end{subfigure}
\hfill
\begin{subfigure}{0.48\textwidth}
    \includegraphics[width=\textwidth]{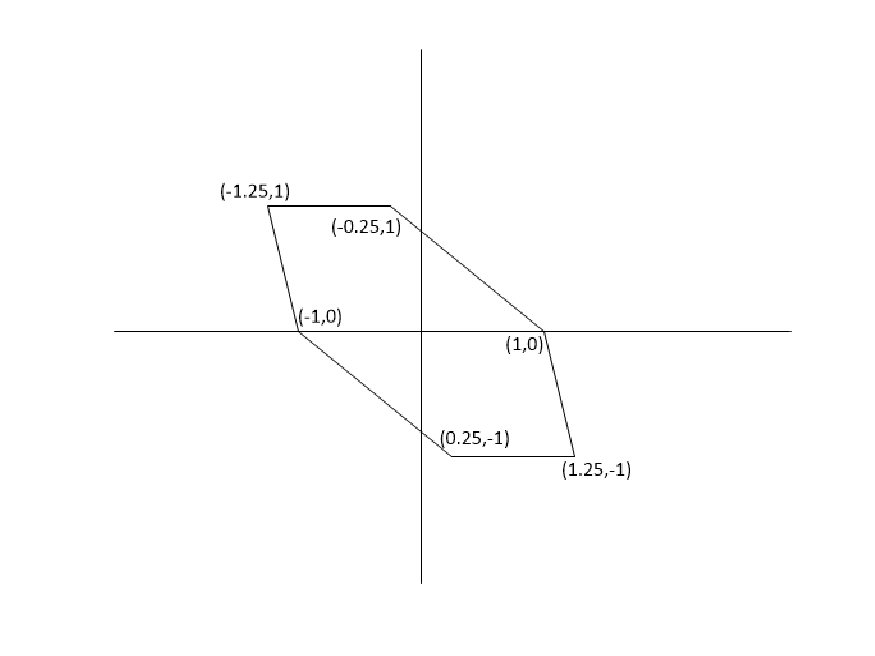}
    \caption{The projected figure of Problem \ref{review}.}
    \label{plane}
\end{subfigure}
\end{figure}

%\begin{figure}[htb]
%%\centerline{\includegraphics{threeD_figure.eps}}
%\centerline{\includegraphics[height=5cm,width=6.5cm]{threeD_figure.eps}}
%\caption{Polytope of problem \ref{review}.}
%\label{threeD}
%\end{figure}
%
%\begin{figure}[htb]
%\centerline{\includegraphics[height=4.0cm,width=6.8cm]{twoD_figure.eps}}
%\caption{The projected figure of the polytope of Problem 
%\ref{review} in two dimensional space.}
%\label{plane}
%\end{figure}

The projected figure of the polytope in x-y plane is depicted 
in Figure \ref{plane} which may help us to verify the result 
to be presented. First, let's consider the $\{1,2\}$ edge. 
Given $i=1$ and $j=2$, i.e., $\u_1=[1,0,1]$ and 
$\u_2=[1,0,-1]$, using Theorem 1, we cannot determine if
$\{1,2\}$ is an edge. It is not difficult (but tedious) to verify 
that the corresponding linear programming 
problem (\ref{edgeLP}) can be written as
\begin{equation}
\begin{array}{cl}
\min_{\y,f}  & f  \\
s.t. & \left[ \begin{array}{rrr}
0.25~~ & -1~~ & 1 \\
0.25~~ & -1~~ & -1 \\
-0.75~~ & -1~~ & 1 \\
-0.75~~ & -1~~ & -1 \\
-2~~ & 0~~ & 1 \\
-2~~ & 0~~ & -1 \\
-2.25~~ & 1~~ & 1 \\
-2.25~~ & 1~~ & -1 \\
-1.25~~ & 1~~ & 1 \\
-1.25~~ & 1~~ & -1 
\end{array} \right] 
\left[ \begin{array}{c}
y_1-1 \\ y_2 \\ y_3 
\end{array} \right] 
\leq f \left[ \begin{array}{c}
1.43614 \\ 1.43614 \\ 1.60078 \\ 1.60078  \\ 2.23607 \\ 2.23607 \\ 2.65753 \\ 2.65753  \\ 1.88746 \\ 1.88746
\end{array} \right], \\
   & [0,~0,~-2] [y_1, ~y_2, ~y_3] ^{\T}=0, \\
  & \left[ \begin{array}{c}
0 \\ -1 \\ -1 
\end{array} \right] \le
\left[ \begin{array}{c}
y_1 \\ y_2 \\ y_3 
\end{array} \right] \le
 \left[ \begin{array}{c}
2 \\ 1 \\ 1 
\end{array} \right]
\end{array}
\end{equation}
which has the optimal solution $\y^*=[2,~0.6821,~0]$ and $f^*=-0.300878$. 
%In addition, $c=\h^{\T}[1,~0,~0]^{\T}=[1,~0.75,~0][1,~0,~0]^{\T}=1$.
According to Theorem \ref{thm2}.(a), the line segment $\{1,2\}$ is an edge.

Given $i=1$ and $j=5$, i.e., $\u_1=[1,0,1]$ and 
$\u_2=[0.25,-1,1]$, using Theorem 1, we cannot determine if
$\{1,5\}$ is an edge. It is not difficult (but tedious) to verify 
that the corresponding linear programming 
problem (\ref{edgeLP}) can be written as
\begin{equation}
\begin{array}{cl}
\min_{\y,f}  & f  \\
s.t. & \left[ \begin{array}{rrr}
0.375~~ & 0.5~~ & -2 \\
0.625~~ & -0.5~~ & 0 \\
0.625~~ & -0.5~~ & -2 \\
-0.375~~ & -0.5~~ & -2 \\
-1.625~~ & 0.5~~ & 0 \\
-1.625~~ & 0.5~~ & -2 \\
-1.875~~ & 1.5~~ & 0 \\
-1.875~~ & 1.5~~ & -2 \\
-0.875~~ & 1.5~~ & 0 \\
-0.875~~ & 1.5~~  & -2 
\end{array} \right] 
\left[ \begin{array}{c}
y_1-0.625 \\ y_2+0.5 \\ y_3 -1
\end{array} \right] 
\leq f \left[ \begin{array}{c}
2.0954 \\ 0.8004 \\ 2.1542 \\ 2.0954  \\ 1.7002 \\ 2.625 \\ 2.4012 \\ 3.125  \\ 1.7366 \\ 2.6487
\end{array} \right], \\
   & [-0.75,~-1,~0] [y_1, ~y_2, ~y_3] ^{\T}=0.0313, \\
  & \left[ \begin{array}{c}
-0.375 \\ -1.5 \\ 0 
\end{array} \right] \le
\left[ \begin{array}{c}
y_1 \\ y_2 \\ y_3 
\end{array} \right] \le
 \left[ \begin{array}{c}
1.625 \\ 0.5 \\ 2 
\end{array} \right]
\end{array}
\end{equation}
which has the optimal solution $\y^*=[2,~0.6821,~0]$ and $f^*=-0.300878$. 
%In addition, $c=\h^{\T}[1,~0,~0]^{\T}=[1,~0.75,~0][1,~0,~0]^{\T}=1$.
According to Theorem \ref{thm2}.(a), the line segment $\{1,2\}$ is not an edge.

Using the same strategy, we can determine that $\{5,6\}$, $\{7,8\}$, and
$\{11,12\}$ are also edges. Algorithm \ref{searchTree}
finds the H-representation as 
\[
\H =  \left[ \begin{array} {rrrr}
    0~~  &   0~~     &    1     \\
    1~~  & 0.25~~   &    0     \\
   0~~   &    -1~~     &    0     \\
   -1~~  & -1.25~~  &    0     \\
   -1~~  &  -0.25~~  &   0    \\
   0~~   &    1~~     &    0     \\
     1~~  & 1.25~~   &   0    \\
     0~~  &   0~~  &  -1      
\end{array} \right]
\hspace{0.1in}
\b=  \left[ \begin{array} {c}
1 \\  1 \\  1  \\ 1 \\ 1  \\ 1 \\ 1  \\ 1 
\end{array} \right]
\hspace{0.1in}
\]

In summary, for all tested convex polytopes with known H-representations
and V-representation, the algorithm is verified to be successful.

\section{Conclusion}

In this paper, an intuitive and novel facet enumeration
algorithm is proposed. The idea is purely based on
geometric observation, therefore, it is easy to understand.
The outer loop of algorithm is based on breadth-first 
search which eventually covers all the vertices of the
polytope. The inner loop of the algorithm is based on
depth-first search which will find the facets associated
with the vertex under the consideration. The algorithm
is implemented in Matlab and numerical test shows
the efficiency and effectiveness of the algorithm.
Due to the duality between the vertex enumeration problem 
and facet enumeration problem, we expect that this method 
can also be used to solve the vertex enumeration problem 
with some minor tweaks.

\end{document}